\begin{document}

\title{A note on the Ramsey number of even wheels versus stars
}


\author{Sh. Haghi         \and
        H. R. Maimani 
}


\institute{Sh. Haghi \at
             Mathematics Section, Department of Basic Sciences, Shahid Rajaee Teacher Training University, P. O. BOX 16783-163,Tehran, Iran \\
             School of Mathematics, Institute for Research in Fundamental Sciences (IPM), P. O. BOX 19395-5746, Tehran, Iran\\
              Tel.: +982122970060\\
              \email{sh.haghi@example.com}           
           \and
           H.R. Maimani \at
              Mathematics Section, Department of Basic Sciences, Shahid Rajaee Teacher Training University, P. O. BOX 16783-163,Tehran, Iran\\
              School of Mathematics, Institute for Research in Fundamental Sciences (IPM), P. O. BOX 19395-5746, Tehran, Iran
}

\date{Received: date / Accepted: date}

\maketitle

\begin{abstract}
For two graphs $G_1$ and $G_2$, the \textit{Ramsey number} $R(G_1,G_2)$ is the smallest integer $N$, such that for any graph on $N$ vertices, either $G$ contains $G_1$ or $\overline{G}$ contains $G_2$. Let $S_n$ be a \textit{star} of order $n$ and $W_m$ be a \textit{wheel} of order $m+1$.
In this paper, it is shown that $R(W_n, S_n)\leq{5n/2-1}$, where $n\geq{6}$ is even. It was proven a theorem which implies that $R(W_n,S_n)\geq{5n/2-2}$, where $n\geq{6}$ is even. Therefore we conclude that $R(W_n,S_n)=5n/2-2$ or $5n/2-1$, for $n\geq{6}$ and even.
\keywords{Ramsey number \and Star \and Wheel \and Weakly pancyclic.}
MSC: 05C55; 05D10
\end{abstract}
\section{Introduction and Background}
Let $G=(V, E)$ denote a finite simple graph on the vertex set $V$ and the edge set $E$. The subgraph of $G$ \textit{induced} by $S\subseteq{V}$, $G[S]$, is a graph with vertex set $S$ and two vertices of $S$ are adjacent in $G[S]$ if and only if they are adjacent in $G$. The complement of a graph $G$, which is denoted by $\overline{G}$, is the graph with vertex set $V(G)$ and two vertices in $\overline{G}$ are adjacent if and only if they are not adjacent in $G$. For a vertex $v \in V(G)$, we denote the set of all neighbors of $v$ by $N_G(v)$ ( or $N(v)$). The degree of a vertex $v$ in a graph $G$, denoted by $deg_G(v)$ ( or $deg(v)$), is the size of the set $N(v)$.
a vertex of a connected graph is a \textit{cut-vertex} if its removal produces 
a disconnected graph.

The graph $K_n$ is the complete graph on $n$ vertices, and $C_n$ is the cycle graph on $n$ vertices.
The minimum degree,
maximum degree and clique number of $G$ are denoted by $\delta(G)$, $\Delta(G)$ and $\omega(G)$, respectively. The \textit{girth} of graph $G$, $g(G)$, is the length of shortest cycle. 
Also, the \textit{circumference} of graph $G$, is the length of longest cycle in $G$ and denoted by $c(G)$.
A graph $G$ of order $n$ is called \textit{Hamiltonian}, \textit{pancyclic} and \textit{weakly pancyclic} if it contains $C_n$, cycles of every length between
$3$ and $n$, and cycles of every length $l$ with $g(G) \leq{l}\leq{c(G)}$, respectively.
We say that $G$ is a
\textit{join} graph if $G$ is the complete union of two graphs
$G_1=(V_1,E_1)$ and $G_2=(V_2,E_2)$. In other words, $V=V_1\cup V_2$
and $E=E_1\cup E_2\cup\{uv : u\in{V_1}, v\in{V_2}\}$. If $G$ is the
join graph of $G_1$ and $G_2$, we shall write $G=G_1+G_2$. Let $G_1$ and $G_2$ be two graphs with vertex sets $V_1$ and $V_2$ and edge sets $E_1$ and $E_2$, respectively.
 A $\textit{wheel}$ $W_m$ is a graph on $m+1$ vertices obtained from $C_m$ by adding one vertex which is called the $\textit{hub}$ and joining each vertex of $C_m$ to the hub with the edges called the $\textit{rim}$ of the wheel. In other words, $W_m=C_m+K_1$.  A \textit{star} $S_n$ is the complete bipartite graph $K_{1,n-1}$.\\
A (proper) coloring is a function
$c : V (G)\longrightarrow{\mathbb{N}}$ (where $\mathbb{N}$ is the set of positive integers) such that $c(u)\neq{c(v)}$ if $u$ and
$v$ are adjacent in $G$. A graph $G$ is \textit{$k$-colorable} if there exists a coloring of $G$ from a set of $k$ colors. The minimum
positive integer $k$ for which $G$ is $k$-colorable is the \textit{chromatic number} of $G$ and is
denoted by $\chi(G)$.\\
For two graphs $G_1$ and $G_2$, the Ramsey number $R(G_1,G_2)$ is the smallest positive integer
$N$ such that for every graph $G$ on $N$ vertices, $G$ contains $G_1$ as a subgraph or the complement
of $G$ contains $G_2$ as a subgraph.\\
Harary et.al in \cite{H} proved the following lower bound for Ramsey numbers:\\
$R(G,H)\geq{(\chi(G)-1).(l(H)-1)+1}$,\\
where $l(H)$ is the number of vertices in the largest
connected component of $H$.\\
In this note we consider the Ramsey number for stars versus wheels. There are a lot of results about this subject.
The Harary lower bound for $R(W_m, S_n)$ is $3n-2$ or $2n-1$, where $m$ is odd or even, respectively. 

There are many results about this Ramsey number when $m$ is odd.
Chen et al. in \cite{C}
proved that if $m\leq{n+1}$ and $m$ is odd, then $R(W_m,S_n) = 3n-2$. 
Hasmawati et al. in \cite{HB} also showed
that $R(W_m,S_n) = 3n-2$, for the case $m\leq{2n-3}$. But, one can see in \cite{Ha}, if $n\geq{2}$ and $m\geq{2n-2}$, then $R(W_m,S_n)=n+m-1$, where $m$ is odd.\\
Also, one can find many results about $R(W_m,S_n)$ when $m$ is even.

It was shown in \cite{S2} that $R(W_4, S_n)=2n-1$ if $n\geq{3}$ and odd, and $R(W_4, S_n)=2n+1$
if $n\geq{4}$ and even.
Korolova in \cite{K} proved that: \\
$R(W_m, S_n)\geq{2n + 1}$ for all
$n\geq{m}\geq{6}$ and $m$ even. Also,
Chen et al. in \cite{C} showed that $R(W_6, S_n)=2n+1$.
\\
It was proven in \cite{Y} that $R(W_8,S_n)=2n+2$ for $n\geq{6}$ and even. Also, it was Shown in \cite{Z} that $R(W_8,S_n)=2n+1$ for $n\geq{5}$ and odd.\\
Li et al. in \cite{BS} indicated two following theorems in which they obtained a new lower bound and showed that for some cases this bound is sharp. 
\begin{theorem}\label{tn}\cite{BS}
If $6\leq{m}\leq{2n-4}$ and m is even, then 
\begin{equation*}
 R(W_m,S_n)\geq \left\{\begin{array}{rl}
2n+m/2-3\qquad\qquad\quad & \mbox{if $n$ is odd and $m/2$ is even}\\
2n+m/2-2\qquad\qquad\quad & \mbox{otherwise}.
\end{array}\right.
\end{equation*}
\end{theorem}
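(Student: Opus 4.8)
The bound in Theorem~\ref{tn} is a lower bound, so the natural route is an explicit construction: for each admissible pair $(m,n)$ I would exhibit a graph $G$ on $N-1$ vertices, where $N$ is the asserted value of the bound, such that $G$ contains no $W_m$ and $\overline G$ contains no $S_n$; this shows $R(W_m,S_n)>N-1$. Since $S_n=K_{1,n-1}$, the requirement ``$\overline G\not\supseteq S_n$'' is equivalent to $\Delta(\overline G)\le n-2$, i.e.\ to $\delta(G)\ge |V(G)|-n+1$. Since $W_m=C_m+K_1$, the requirement ``$G\not\supseteq W_m$'' says that for no vertex $v$ does $G[N(v)]$ contain a copy of $C_m$. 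So the whole problem is to build a graph with prescribed (large) minimum degree in which every neighbourhood is $C_m$-free.

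For the generic case, i.e.\ when it is \emph{not} true that $n$ is odd and $m/2$ is even, I would put $|V(G)|=2n+m/2-3$ and write $V(G)=A\cup B\cup C$ with $|A|=|B|=n-1$, $|C|=m/2-1$, where $A$ is independent, $C$ is a clique, and $B$ contains a distinguished vertex $b_0$ that is isolated in $G[B]$ while $G[B\setminus\{b_0\}]$ is an $(m/2-1)$-regular graph all of whose cycles are shorter than $m$ (for $n-2<m$ any $(m/2-1)$-regular graph on $n-2$ vertices works; for $n-2\ge m$ one takes the components to be cliques $K_{m/2}$ together with one further small $(m/2-1)$-regular piece of order at most $m-1$ for the remainder). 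The only cross edges are: $A$ complete to $B$, $A$ complete to $C$, and $C$ complete to $\{b_0\}$. A short count shows $G$ is $(n+m/2-2)$-regular, hence $\overline G$ is $(n-2)$-regular and contains no $S_n$. The parity hypothesis is used only for the existence of the $(m/2-1)$-regular graph on $n-2$ vertices, which needs $(m/2-1)(n-2)$ even; this fails precisely when $m/2$ is even and $n$ is odd, and in that exceptional case I would instead take $|C|=m/2-2$, so $|V(G)|=2n+m/2-4$ (the weaker bound), after which $G[B\setminus\{b_0\}]$ only has to have all degrees in $\{m/2-2,m/2-1\}$, which is always realizable.

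It then remains to check, in both cases, that no neighbourhood contains $C_m$, by cases on the part of $v$. If $v\in A$ then $G[N(v)]=G[B\cup C]$ is the disjoint union of the clique $C\cup\{b_0\}$ (of order $m/2$ in the generic case, since $b_0$ is joined to all of the $(m/2-1)$-clique $C$, and of order $m/2-1$ in the exceptional case) with the graph $G[B\setminus\{b_0\}]$, all of whose cycles are shorter than $m$; so $G[N(v)]$ is $C_m$-free. If $v=b_0$ or $v\in C$ then $G[N(v)]=\overline{K_{n-1}}+K_c$ with $c\le m/2-1$, and if $v\in B\setminus\{b_0\}$ then $G[N(v)]=\overline{K_{n-1}}+H$ with $|V(H)|\le m/2-1$. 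In any join $\overline{K_{n-1}}+H$ with $|V(H)|\le m/2-1$, a copy of $C_m$ would use $p$ vertices of $\overline{K_{n-1}}$ and $q$ of $H$ with $p+q=m$ and $p\ge1$; since the $p$ vertices of the independent set $\overline{K_{n-1}}$ are pairwise non-adjacent they cannot be consecutive on the cycle, so each of the $p$ gaps between them holds a distinct vertex of $H$, forcing $q\ge p=m-q$, i.e.\ $q\ge m/2>|V(H)|$, a contradiction. (The bound $|C|\le m/2-1$ is sharp here: if $|C|\ge m/2$, then for $v=b_0$ the neighbourhood contains $\overline{K_{n-1}}+K_{m/2}$, which, using $n-1\ge m/2$ — valid since $m\le 2n-4$ — contains $C_m$.) Hence no neighbourhood contains $C_m$ and $G\not\supseteq W_m$.

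The main obstacle I expect is bookkeeping rather than a single hard idea: arranging the sizes of $A$, $B$, $C$ and the few cross edges so that $G$ has minimum degree \emph{exactly} $|V(G)|-n+1$ while \emph{every} neighbourhood stays $C_m$-free, and in particular assembling $G[B\setminus\{b_0\}]$ in the regime $n-2\ge m$ and tracking the parity of $(m/2-1)(n-2)$, which is exactly what forces the weaker bound in the exceptional case. The only structural input needed is the elementary observation that a copy of $C_m$ cannot sit inside a join $K_a+\overline{K_b}$ with $a<m/2$, nor inside a disjoint union of graphs each of circumference less than $m$.
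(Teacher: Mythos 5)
This statement is quoted by the paper from Li and Schiermeyer \cite{BS}; the paper itself contains no proof of Theorem~\ref{tn}, so there is no internal argument to compare against. Judged on its own, your construction is correct and is the standard route for such lower bounds: an explicit graph on one fewer vertex than the claimed bound whose complement has maximum degree $n-2$ and all of whose vertex neighbourhoods are $C_m$-free. I checked the arithmetic: with $|A|=|B|=n-1$, $|C|=m/2-1$ and your edge set, $G$ is indeed $(n+m/2-2)$-regular on $2n+m/2-3$ vertices, so $\overline{G}$ is exactly $(n-2)$-regular; the neighbourhood analysis is also sound, since $N(a)$ for $a\in A$ splits into the clique $C\cup\{b_0\}$ of order $m/2$ plus $G[B\setminus\{b_0\}]$, while every other neighbourhood is contained in ``independent set of size $n-1$ plus at most $m/2-1$ further vertices'', where your gap-counting argument excludes $C_m$. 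Two spots are left informal, though both are routinely fixable and you flag them yourself: (i) the existence of $G[B\setminus\{b_0\}]$ in the regime $n-2\ge m$ needs a short argument that $n-2$ can be partitioned into component sizes lying in $[m/2,\,m-1]$ (so each component is too small to hold $C_m$) together with the parity check that an $(m/2-1)$-regular graph exists on each part — in the generic case $m/2$ even forces $n$ even and the leftover part has even order, so this goes through; (ii) in the exceptional case ($n$ odd, $m/2$ even) you state only the degree condition on $G[B\setminus\{b_0\}]$, but you must still require it to be $C_m$-free; the same small-component construction (parts of order between $m/2-1$ and $m-1$, degrees in $\{m/2-2,m/2-1\}$) supplies it, and allowing the two degree values removes the parity obstruction, which is exactly why only the weaker bound survives there. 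With those details written out, your argument is a complete, self-contained proof of the cited lower bound, essentially of the same extremal-graph type as in \cite{BS}.
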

\begin{theorem}\cite{BS}
If $n + 1\leq{m}\leq{2n-4}$ and $m$ is even, then
\begin{equation*}
 R(W_m,S_n)= \left\{\begin{array}{rl}
2n+m/2-3\qquad\qquad\quad & \mbox{if $n$ is odd and $m/2$ is even}\\
2n+m/2-2\qquad\qquad\quad & \mbox{otherwise}.
\end{array}\right.
\end{equation*}
\end{theorem}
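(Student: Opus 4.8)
The lower bound needs no new work: the hypothesis $n+1\le m\le 2n-4$ implies $6\le m\le 2n-4$, so Theorem~\ref{tn} already supplies $R(W_m,S_n)\ge 2n+m/2-3$ when $n$ is odd and $m/2$ is even and $R(W_m,S_n)\ge 2n+m/2-2$ otherwise. The entire task is therefore the matching upper bound, and the plan is a neighbourhood-cycle argument. Writing $N$ for the claimed value, let $G$ be any graph on $N$ vertices. If $\overline{G}$ contains no $S_n=K_{1,n-1}$, then no vertex of $G$ has $n-1$ non-neighbours, so $\delta(G)\ge N-n+1$; I assume this and aim to exhibit $W_m\subseteq G$. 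Since a wheel is a hub joined to a cyclic rim, it suffices to find a vertex $v$ for which $H:=G[N(v)]$ contains a cycle $C_m$.

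I would first record two elementary estimates: $|V(H)|=\deg(v)$, and for $u\in N(v)$, $\deg_H(u)=|N(u)\cap N(v)|\ge \deg(u)+\deg(v)-N$. Taking $v$ of maximum degree $\Delta$ gives $\delta(H)\ge \delta(G)+\Delta-N\ge 2\delta(G)-N$, and feeding in $\delta(G)\ge N-n+1$ together with $m\ge n+1$ yields $\delta(H)/|V(H)|\ge 1/3$, with strict inequality in the case $N=2n+m/2-2$. Assuming for the moment that $H$ is $2$-connected and non-bipartite, I would combine the Erd\H{o}s--Gallai/Dirac circumference bound $c(H)\ge\min\{|V(H)|,2\delta(H)\}$ with the weakly pancyclic theory of Brandt, Faudree and Goddard (a non-bipartite graph whose minimum degree exceeds a third of its order is weakly pancyclic with girth $3$ or $4$) to obtain every cycle length between $4$ and $c(H)$. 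In the case $N=2n+m/2-2$ the estimates give $2\delta(H)\ge 2(2\delta(G)-N)\ge m$ and $|V(H)|\ge n+m/2-1>m$, hence $c(H)\ge m$; as $4\le m\le c(H)$, weak pancyclicity delivers the rim $C_m$ and therefore $W_m$. This settles the ``otherwise'' case cleanly.

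The genuine difficulty is the case $n$ odd, $m/2$ even, $N=2n+m/2-3$, where the same computation gives only $2\delta(H)\ge m-2$, so the generic circumference bound falls two short of $m$, and the density ratio sits exactly at $1/3$ rather than above it. This is precisely where the parity hypotheses must be spent, and I expect the real work to lie here. My plan is to show that a failure would force equality throughout: $N(u)\cup N(v)=V(G)$ for every rim candidate $u$, and $c(H)=2\delta(H)$, which pins $H$ down to a rigid near-extremal graph. A counting argument keyed to the parities of $n$ and $m/2$ should then show this configuration is incompatible with the vertex budget $N=2n+m/2-3$, exactly reproducing the $-3$ rather than $-2$; intuitively the parity conditions are there to say which near-extremal graph can still avoid both $W_m$ and $S_n$ in the complement, so the same invariant that governs Theorem~\ref{tn} must resurface in the upper bound.

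Finally I would clear away the structural degeneracies set aside above, and I stress that these are not automatic from density alone. If $H$ is bipartite with parts $A,B$, then since $C_m$ is an even cycle I would invoke a bipancyclic criterion after checking $\min\{|A|,|B|\}\ge m/2$ from the degree bound, with the admissible even lengths again controlled by the stated parity. If $H$ is disconnected (or has a small cut), a vertex in one component sees every other component among its at most $n-2$ non-neighbours, while the minimum-degree bound forces each component to have at least $m/2$ vertices; reconciling these with $|V(H)|=\deg(v)$ leaves only a narrow, parity-sensitive range of possibilities, which the same extremal analysis must rule out. The crux, and the step I expect to be hardest, is thus the intertwined treatment of these near-extremal neighbourhoods in the $n$ odd, $m/2$ even case, since it is exactly the $-3$ versus $-2$ discrepancy that the parity conditions exist to account for.
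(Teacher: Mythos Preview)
This theorem carries the citation \cite{BS} and is quoted in the paper as a known result of Li and Schiermeyer; the present paper offers no proof of it. There is therefore no in-paper argument to compare your proposal against.

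Regarding the proposal on its own terms: your strategy---pass to a neighbourhood subgraph $H=G[N(v)]$, bound $\delta(H)$ from below via $\delta(G)\ge N-n+1$, then combine Dirac's circumference bound (Lemma~\ref{l3}) with the Brandt--Faudree--Goddard weak-pancyclicity lemma (Lemma~\ref{l1})---is exactly the template this paper deploys for its own Theorem~\ref{t1}, so it is a sensible line of attack, and your arithmetic in the ``otherwise'' case $N=2n+m/2-2$ is sound modulo the structural hypotheses you set aside. But you explicitly leave the core unfinished: when $n$ is odd and $m/2$ even, with $N=2n+m/2-3$, your circumference estimate falls two short and your density ratio sits on the threshold $1/3$, and in place of an argument you give only an intention (``a counting argument keyed to the parities \ldots\ should then show''). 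The bipartite and low-connectivity degeneracies are likewise deferred to a sketch. So what you have is a plausible outline rather than a proof; the parity-sensitive extremal analysis you correctly identify as the crux is precisely where the substance lies, and nothing in the present paper supplies it.
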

But for some cases, $R(W_m,S_n)$ where $m$ is even, is still open. one of these cases is when $m=n$. 
It was shown in \cite{K} that $R(W_n, S_n)\leq{3n-3}$ when $n$ is even.
In this paper, we want to improve this upper bound and prove that:\\
\begin{theorem}\label{t1}
$R(W_n, S_n)\leq{5n/2-1}$, where $n\geq{6}$ is even.
\end{theorem}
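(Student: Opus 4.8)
The plan is to recast the statement in terms of minimum degree and then hunt for the rim cycle inside a single neighbourhood. Suppose $G$ has $N=5n/2-1$ vertices and $\overline{G}$ contains no $S_n=K_{1,n-1}$; then $\Delta(\overline G)\le n-2$, which is precisely $\delta(G)\ge N-1-(n-2)=3n/2$. Since $W_n=C_n+K_1$, it is enough to produce a vertex $h$ (a prospective hub) for which $G[N(h)]$ contains a cycle of length $n$. Fix a vertex $h$ of maximum degree, write $H=G[N(h)]$ and $m=|N(h)|=\Delta(G)$. Every vertex of $G$ has at most $N-1-\delta(G)=n-2$ non-neighbours, so every vertex of $N(h)$ has at most $n-2$ non-neighbours inside $N(h)$; hence $\delta(H)\ge(m-1)-(n-2)=m-n+1$. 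Moreover $m\ge\delta(G)\ge 3n/2$, so $\delta(H)\ge n/2+1$ and $|V(H)|=m\ge 3n/2>n$: there is ample room for a $C_n$, and the whole question is whether $H$ actually contains one.

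The argument then divides according to the size of $m$. If $m\ge 2n-2$, then $\delta(H)\ge m-n+1\ge m/2$, so $H$ is Hamiltonian by Dirac's theorem, and by Bondy's theorem $H$ is either pancyclic or equal to $K_{m/2,m/2}$; in the first case $C_n\subseteq H$ because $3\le n\le m$, and in the second case $C_n\subseteq K_{m/2,m/2}$ because $n$ is even and $n\le m$. Hence we may assume $\Delta(G)\le 2n-3$, i.e. $3n/2\le m\le 2n-3$ and $n/2+1\le\delta(H)\le n-2$. In this regime $\delta(H)\ge m-n+1\ge(m+2)/3$ (the last inequality is equivalent to $m\ge(3n-1)/2$, which holds since $m\ge 3n/2$), so if $H$ is $2$-connected and non-bipartite, the theorem of Brandt, Faudree and Goddard on weakly pancyclic graphs shows that $H$ is weakly pancyclic with girth $3$ or $4$. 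Being $2$-connected with $\delta(H)\ge m-n+1$, $H$ also satisfies, by Dirac's circumference bound, $c(H)\ge\min\{m,\,2(m-n+1)\}=2(m-n+1)\ge n+2$, where the equality uses $m\le 2n-3$ and the final estimate uses $m\ge 3n/2$. Since $3\le g(H)\le 4<n\le c(H)$, the integer $n$ lies between the girth and the circumference of the weakly pancyclic graph $H$, hence $C_n\subseteq H$ and $W_n\subseteq G$.

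What remains — and what I expect to be the genuine obstacle — are the degenerate shapes of $H$ in this second regime, where the minimum-degree hypothesis is just too weak to rule them out and they must be dealt with by hand. If $H$ is not $2$-connected (disconnected, or with a cut-vertex), then $\delta(H)\ge m-n+1$ combined with $m\le 2n-3$ forces every component, respectively block, of $H$ to be an almost complete graph on at most $n-1$ vertices; thus $G$ does contain large cliques, but not necessarily a $K_{n+1}$, and one re-selects the hub inside such a near-clique and uses the at least $5n/2-2-m\ge n/2+1$ vertices of $V(G)\setminus(N(h)\cup\{h\})$, each of which has at least $3n/2$ neighbours in $G$, to splice a length-$n$ rim across the pieces. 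If $H$ is $2$-connected and bipartite, say with parts $A$ and $B$, then $m-n+1\le|A|,|B|\le n-1$, so both parts have size at least $n/2+1$ and every vertex misses at most $n/2-2$ vertices of the opposite part; a common-neighbour count gives $g(H)=4$, Dirac's circumference bound gives $c(H)\ge 2\delta(H)\ge n+2$, and a Moon--Moser type argument (the bipartite analogue of the weakly pancyclic conclusion) then extracts a $C_n$, the case of complete bipartite $H$ being immediate since both parts exceed $n/2$.

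Collecting the cases, for an appropriate hub $h$ one always obtains $C_n\subseteq G[N(h)]$, so $W_n\subseteq G$; hence every graph $G$ on $5n/2-1$ vertices satisfies $W_n\subseteq G$ or $S_n\subseteq\overline G$, which is exactly $R(W_n,S_n)\le 5n/2-1$. The main difficulty is concentrated entirely in the degenerate neighbourhood cases of the previous paragraph, and especially in the non-$2$-connected case, where one must leave the single neighbourhood and use the global minimum-degree condition; by contrast the generic cases reduce cleanly to classical pancyclicity results.
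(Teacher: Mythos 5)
Your generic cases are sound and essentially parallel the paper: the reduction to $\delta(G)\ge 3n/2$, the bound $\delta(H)\ge m-n+1$, the use of Brandt--Faudree--Goddard plus Dirac's circumference bound when the neighbourhood is $2$-connected and non-bipartite, and a Moon--Moser/balanced-bipartite Hamiltonicity argument in the bipartite case (the latter is only sketched, but it can be completed exactly as in the paper by trimming both parts to size $n/2$ and checking the degree-sum condition, so I would not call it a gap by itself; note it does not need $2$-connectivity).

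The genuine gap is the case you yourself flag as "the genuine obstacle": a neighbourhood $H$ that is disconnected or has a cut-vertex. Your sketch there does not work as described. First, the structural claim is false: with $m=3n/2$ a component (or block) of $H$ can have $n-2$ vertices and minimum degree only $n/2+1$, so it need not be "almost complete." Second, the proposed repair -- re-selecting the hub and using the $\ge n/2+1$ vertices outside $N(h)\cup\{h\}$ to "splice a length-$n$ rim across the pieces" -- is not an argument: the rim of a wheel with hub $h'$ must lie entirely inside $N(h')$, and nothing has been established about the structure of the new neighbourhood; you are back where you started. The paper resolves precisely this case by a different, global idea: if \emph{no} neighbourhood is bipartite or $2$-connected, it looks at three vertices simultaneously -- a vertex $t$, and two adjacent vertices $u,v$ in the larger component of $H_t$ (or of $H_t$ minus its cut-vertex) -- shows that each of $H_t$, $H_u$, $H_v$ splits into exactly two pieces of size $\ge n/2+k+1$ (resp.\ $\ge n/2+k$), and that the "small" pieces $X_u$, $X_v$ are pairwise disjoint and disjoint from $V(H_t)$, whence $|V(G)|\ge (3n/2+k)+(n/2+k'+2)+(n/2+k''+2)>5n/2-1$ (with the analogous count in the cut-vertex case), a contradiction. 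Some idea of this kind -- leaving the single fixed hub and counting over several neighbourhoods -- is exactly what your proposal is missing, so as it stands the proof is incomplete in the case where all the difficulty is concentrated.
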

\section{Preliminary Lemmas and Theorems}
To prove Theorem \ref{t1}, we need some theorems and lemmas.
\begin{lemma}\label{l1}
(Brandt et al. \cite{B}). Every non-bipartite graph $G$ of order $n$ with ${\delta(G)\geq{(n+2)/3}}$ is weakly pancyclic with $g(G) = 3$ or $4$.
\end{lemma}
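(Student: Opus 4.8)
The plan is to establish the two assertions separately: first that the girth satisfies $g(G)\in\{3,4\}$, and then that $G$ is weakly pancyclic, i.e.\ that it contains a cycle of every length $\ell$ with $g(G)\le \ell\le c(G)$.

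For the girth bound I would use a Moore-type counting argument. Suppose toward a contradiction that $g(G)\ge 5$ and fix a vertex $v$. Since $G$ has no triangle, $N(v)$ is an independent set; since $G$ has no $4$-cycle, two distinct vertices of $N(v)$ have no common neighbor other than $v$ (such a common neighbor would close a $4$-cycle). Hence the vertices at distance $0$, $1$, and $2$ from $v$ are all distinct, giving $n\ge 1+\delta(G)+\delta(G)(\delta(G)-1)=\delta(G)^2+1$, so $\delta(G)\le\sqrt{n-1}$. This contradicts $\delta(G)\ge (n+2)/3$, since $(n+2)^2>9(n-1)$ reduces to $n^2-5n+13>0$, which holds for every $n$. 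Therefore $g(G)\le 4$, and as a graph with positive minimum degree contains a cycle, $g(G)\in\{3,4\}$.

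For weak pancyclicity I would argue by contradiction through a single \emph{descent} step. Suppose some length in $(g(G),c(G))$ is not realized, let $k$ be the smallest missing length, and among all cycles longer than $k$ let $C$ be a shortest one, of length $\ell$. The heart of the argument is to show that from $C$ one can always produce a cycle of length $\ell-1$: this immediately contradicts the choice of $k$ and $\ell$ (the new cycle has length either $k$, which is forbidden, or strictly between $k$ and $\ell$, contradicting minimality of $\ell$). To build the $(\ell-1)$-cycle I would analyze the chords of $C$ together with the attachments of off-cycle vertices. A short chord $v_iv_{i+2}$ yields an $(\ell-1)$-cycle at once, so I may assume $C$ has no such chord; the hypothesis $\delta(G)\ge (n+2)/3$ then forces each vertex of $C$ to send many edges to $C$ or to off-cycle vertices. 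Either a pair of crossing chords $v_av_c,\,v_bv_d$ with $a<b<c<d$, or an off-cycle path whose two endpoints land on $C$ at a controlled distance, produces cycles of two consecutive lengths, one of which is $\ell-1$.

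The main obstacle will be this descent step, because any single chord or single attachment generically shifts the cycle length by more than one, so the length must be tuned to exactly $\ell-1$ by combining two structures (a crossing pair of chords, or two arcs of $C$ glued by an off-cycle path of the right length). Making the counting tight enough that $\delta(G)\ge(n+2)/3$, rather than a larger bound, suffices is exactly where non-bipartiteness re-enters: it excludes the extremal configurations of this minimum degree that miss lengths, most notably the balanced complete bipartite graph $K_{n/2,n/2}$, which has $\delta=n/2$ yet contains no odd cycle at all. I would therefore first isolate and dispatch the bipartite and near-extremal cases directly, and then run the chord/attachment counting on the remaining graphs to complete the descent.
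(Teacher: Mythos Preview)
The paper does not prove this lemma at all; it is quoted from Brandt, Faudree and Goddard and invoked as a black box in Subcase~2.2 of the proof of Theorem~\ref{t1}. There is thus no argument in the paper against which to compare your proposal.

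On the merits of your sketch: the Moore-bound argument for $g(G)\in\{3,4\}$ is correct and standard. The weak-pancyclicity half, however, is only a plan, and the plan has a genuine gap at its load-bearing step. Your descent requires that from any cycle $C$ of length $\ell>g(G)$ one can manufacture a cycle of length exactly $\ell-1$, and you propose to do this via crossing chords or off-cycle attachments. But the assertion that a pair of crossing chords $v_av_c,\,v_bv_d$ ``produces cycles of two consecutive lengths, one of which is $\ell-1$'' is simply false in general: the cycle lengths one obtains depend on the four arc lengths and need not include $\ell-1$ (for instance, on $C_{10}$ the chords $v_1v_5$ and $v_3v_8$ give cycles of lengths $5,6,7,8$ but not $9$). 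Similarly, an off-cycle path of length $p$ joining $v_i$ to $v_j$ gives cycles of lengths $p+d$ and $p+\ell-d$ where $d$ is the shorter arc, and you have no control forcing one of these to be $\ell-1$. The hypothesis $\delta\ge(n+2)/3$ is tight enough that the extremal configurations are delicate, and the actual proof in Brandt--Faudree--Goddard is substantially longer than a one-step local shortening; you have identified where the difficulty lies but not supplied the mechanism that overcomes it.
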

\begin{lemma}\label{l3}
(Dirac \cite{D}). Let $G$ be a $2$-connected graph of order $n\geq{3}$ with $\delta(G) =\delta$. Then $c(G)\geq{min\{2\delta, n\}}$.
\end{lemma}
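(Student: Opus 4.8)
The plan is to translate the star condition into a minimum-degree bound and then to build the wheel inside the neighbourhood of a single hub vertex. Suppose $G$ has $N=5n/2-1$ vertices and $\overline{G}$ contains no $S_n=K_{1,n-1}$; then $\overline{G}$ has no vertex of degree $n-1$, so $\Delta(\overline{G})\leq n-2$ and every vertex satisfies $\deg_G(v)\geq (N-1)-(n-2)=3n/2$, i.e. $\delta(G)\geq 3n/2$. Thus it suffices to show that every graph $G$ of order $5n/2-1$ with $\delta(G)\geq 3n/2$ contains $W_n$. Since $W_n=C_n+K_1$, my goal will be to exhibit a vertex $h$ and a cycle of length exactly $n$ inside $H:=G[N(h)]$: every vertex of such a cycle is a neighbour of $h$, so together with $h$ it forms the rim and the hub of a copy of $W_n$.

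First I would record two parameters of $H$. Writing $d_h:=\deg_G(h)=|V(H)|\geq 3n/2$, for each $u\in N(h)$ inclusion--exclusion gives
\begin{equation*}
\deg_H(u)=|N(u)\cap N(h)|\geq \deg_G(u)+d_h-N\geq 3n/2+d_h-(5n/2-1)=d_h-n+1,
\end{equation*}
so $\delta(H)\geq d_h-n+1\geq n/2+1$. The principal case is that $H$ is non-bipartite and $2$-connected. Then $\delta(H)\geq d_h-n+1\geq (d_h+2)/3$ (which rearranges to $d_h\geq 3n/2-1$, true), so Lemma~\ref{l1} shows $H$ is weakly pancyclic with $g(H)\in\{3,4\}$; and Lemma~\ref{l3} gives $c(H)\geq\min\{2\delta(H),d_h\}\geq\min\{2(d_h-n+1),\,d_h\}\geq n$, since $d_h\geq 3n/2$ makes both entries at least $n$. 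As $g(H)\leq 4\leq n\leq c(H)$, weak pancyclicity produces a cycle of length $n$ in $H$, and hence $W_n\subseteq G$, as required.

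The heart of the proof, and the step I expect to be the main obstacle, is ruling out or absorbing the two degenerate shapes of $H$: that $H$ is bipartite, or that $H$ is not $2$-connected. For the connectivity failure I would first bound its severity. If $A$ is a component of $H$ and $u\in A$, then all neighbours of $u$ lie in $\{h\}\cup(A\setminus\{u\})\cup R$ with $R:=V(G)\setminus(N(h)\cup\{h\})$ and $|R|=5n/2-2-d_h$, forcing $|A|\geq d_h-n+2$; hence a disconnected $H$ has at most two components and satisfies $d_h\leq 2n-4$, every component then having fewer than $n$ vertices. In that regime each vertex of a component sends at least $n/2+2$ edges into $R$, and I would reconstruct the rim from a component together with $R$. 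A parallel dichotomy settles the bipartite case: when $d_h$ is large every edge of $H$ has a common neighbour inside $N(h)$ (forcing a triangle, hence non-bipartiteness), while when $d_h$ is small I would exploit that $n$ is even, so that the required even rim $C_n$ can be found directly in the dense bipartite graph $H$ whose two sides each have at least $n/2+1$ vertices. Making these exceptional arguments interlock cleanly with the main case, rather than the main case itself, is where the difficulty lies.
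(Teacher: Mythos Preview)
Your proposal does not address the stated lemma at all. Lemma~\ref{l3} is Dirac's classical circumference bound for $2$-connected graphs; the paper quotes it from \cite{D} without proof, as a standard preliminary result. What you have written is instead a proof sketch for Theorem~\ref{t1}, the main upper bound $R(W_n,S_n)\leq 5n/2-1$, and you invoke Lemma~\ref{l3} as a tool within that argument rather than proving it.

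If your intention was in fact to prove Theorem~\ref{t1}, then your main case (when $H=G[N(h)]$ is non-bipartite and $2$-connected) is exactly the paper's argument. However, your treatment of the two degenerate cases is incomplete and diverges from the paper. For the bipartite case, the paper does not argue about triangles or split on the size of $d_h$: it directly trims the two parts of $H$ to size $n/2$ each and applies a bipartite Hamiltonicity criterion (Lemma~\ref{l4}) to obtain $C_n$. For the connectivity failure, your plan to ``reconstruct the rim from a component together with $R$'' is not what the paper does and would require an argument you have not supplied; the paper instead shows that if \emph{every} $H_t$ fails $2$-connectivity, then choosing adjacent $u,v$ in one component of $H_t$ and examining the disjoint small components of $H_u$ and $H_v$ forces $|V(G)|>5n/2-1$, a counting contradiction. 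So the exceptional cases, which you correctly flag as the crux, remain genuinely unresolved in your proposal.
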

\begin{theorem}\label{t2}(Faudree and Schelp \cite{F}, Rosta \cite{R})
\begin{equation*}
 R(C_n,C_m)=\left\{\begin{array}{rl}
2n-1\qquad\qquad\quad & \mbox{for \,\, $3\leq{m}\leq{n}$,\,\, $m$ odd $(n,m)\neq{(3,3)} $}\\
n+m/2-1\qquad\quad & \mbox{for\,\, $4\leq{m}\leq{n}$,\,\, $m, n$  even $(n,m)\neq{(4,4)}$}\\
max\{n+m/2-1, 2m-1\}& \mbox{for \,\,  $4\leq{m}<{n}$,\,\, $m$ even and $n$ odd}.
\end{array}\right.
\end{equation*}
\end{theorem}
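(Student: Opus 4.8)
The plan is to prove the statement in contrapositive form: take an arbitrary graph $G$ on $N = 5n/2 - 1$ vertices, assume that $\overline{G}$ contains no $S_n = K_{1,n-1}$, and show that $G$ must then contain $W_n$. The assumption on $\overline{G}$ says that no vertex has degree at least $n-1$ in $\overline{G}$, i.e. $\deg_{\overline{G}}(u) \le n-2$ for every vertex $u$. Translating to $G$, this yields $\delta(G) \ge (N-1) - (n-2) = 3n/2$. Since $W_n = C_n + K_1$, to exhibit $W_n$ it suffices to find a vertex $v$ (the hub) together with a cycle of length $n$ inside its neighbourhood, that is, a copy of $C_n$ in the induced subgraph $H := G[N(v)]$.

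First I would fix $v$ to be a vertex of maximum degree and record the two degree estimates that drive everything. On one hand $|V(H)| = \deg(v) = \Delta(G) \ge 3n/2$. On the other hand each $u \in N(v)$ has at most $n-2$ non-neighbours in $G$, so at most $n-2$ of them lie in $N(v) \setminus \{u\}$; hence $\delta(H) \ge (\deg(v) - 1) - (n-2) = \deg(v) - n + 1 \ge n/2 + 1$. These bounds are tuned to the hypotheses of the two lemmas: writing $d = |V(H)| \ge 3n/2$, one checks $\delta(H) \ge d - n + 1 \ge (d+2)/3$ (the Brandt threshold, using $d \ge (3n-1)/2$) and $2\delta(H) \ge 2(d-n+1) \ge n+2$ (enough for Dirac to force a long cycle).

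The main case is when $H$ is $2$-connected and non-bipartite. Then Lemma \ref{l3} gives $c(H) \ge \min\{2\delta(H), |V(H)|\} \ge \min\{n+2, 3n/2\} \ge n$, while Lemma \ref{l1} shows that $H$ is weakly pancyclic with $g(H) \in \{3,4\}$. Consequently $H$ contains cycles of every length between its girth and its circumference, and since $n \ge 6 > 4$ lies in the interval $[\,g(H), c(H)\,] \supseteq [4, n]$, we obtain $C_n \subseteq H$ and therefore $W_n \subseteq G$, as required.

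The main obstacle is the degenerate configurations in which this machinery does not apply directly, namely when $H$ is bipartite or when $H$ fails to be $2$-connected (including the possibility that $H$ is disconnected, which the degree bounds still permit once $n \ge 8$). Here I would invoke Theorem \ref{t2}: since $n$ is even, $R(C_n, C_n) = n + n/2 - 1 = 3n/2 - 1 < |V(H)|$, so either $H$ already contains $C_n$ (and we are done) or $\overline{G}[N(v)]$ contains $C_n$. The remaining work is to rule out, or to exploit, this last situation — an $n$-cycle of $\overline{G}$ sitting inside $N(v)$ — using the global constraint $\Delta(\overline{G}) \le n-2$, either to relocate the hub to a vertex whose neighbourhood is $2$-connected and non-bipartite, or to reach a direct contradiction with $\delta(G) \ge 3n/2$. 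I expect this case analysis to be the technical heart of the argument, together with a direct check of the boundary case $n = 6$, where the degree bounds already force $H$ to be connected.
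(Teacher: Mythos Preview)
Your proposal is not a proof of the stated result. Theorem~\ref{t2} is the classical Faudree--Schelp/Rosta determination of the cycle Ramsey numbers $R(C_n,C_m)$; the paper does not prove it at all but merely quotes it from \cite{F,R} as a tool. What you have written is instead a sketch of Theorem~\ref{t1}, the bound $R(W_n,S_n)\le 5n/2-1$. Indeed, midway through you explicitly \emph{invoke} Theorem~\ref{t2} (``Here I would invoke Theorem~\ref{t2}\ldots''), which would be circular if this were meant to establish Theorem~\ref{t2}.

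Even read as an attempt at Theorem~\ref{t1}, the outline is incomplete in exactly the places the paper has to work hardest. Your treatment of the $2$-connected non-bipartite case via Lemmas~\ref{l1} and~\ref{l3} matches the paper's Subcase~2.2, but for the remaining configurations you only observe that $\overline{G}[N(v)]$ contains $C_n$ and then defer the analysis. The paper handles these cases quite differently: when $H_v$ is bipartite it trims the two parts to size $n/2$ and applies the bipartite Hamiltonicity criterion (Lemma~\ref{l4}) to force $C_n\subseteq H_v$ directly; when every $H_t$ is non-bipartite but fails $2$-connectivity, it shows each such $H_t$ (or $H_t$ minus a cut-vertex) splits into exactly two components of size at least roughly $n/2$, then picks adjacent $u,v$ in one component and argues that the ``other'' components of $H_t,H_u,H_v$ are pairwise disjoint, overcounting $|V(G)|$ past $5n/2-1$. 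Neither of these ideas appears in your sketch, and the bare existence of $C_n$ in $\overline{G}[N(v)]$ does not by itself lead to a contradiction.
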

\begin{lemma}\label{l4}\cite{GO}
Let $G$ be a bipartite graph of order $n$ ($n$ even) with bipartition $(X,Y)$ and $|X|=|Y|=n/2$. If for all distinct nonadjacent vertices $u\in{X}$ and $v\in{Y}$, 
 we have $deg(u)+deg(v)>{n/2}$, then $G$ is Hamiltonian.
\end{lemma}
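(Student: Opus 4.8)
The plan is to recognize this as the balanced bipartite analogue of Ore's theorem (the Moon--Moser condition) and to prove it by the edge-maximal counterexample technique. Write $m=n/2=|X|=|Y|$; since the hypothesis is vacuous for $m\le 1$ and $K_{m,m}$ is Hamiltonian for $m\ge 2$, I may assume $m\ge 2$. Suppose, for contradiction, that $G$ is not Hamiltonian. First I would add missing edges joining $X$ to $Y$ one at a time until reaching a graph $G'$ that is still non-Hamiltonian but such that adding any further $X$--$Y$ edge creates a Hamiltonian cycle. Adding edges only raises degrees and cannot create new nonadjacent pairs, so $G'$ again satisfies the hypothesis: any $u\in X$, $v\in Y$ nonadjacent in $G'$ are nonadjacent in $G$, whence $\deg_{G'}(u)+\deg_{G'}(v)\ge \deg_{G}(u)+\deg_{G}(v)>m$.

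Next I would extract a Hamiltonian path. Because $G'$ is non-Hamiltonian it is not the complete bipartite graph $K_{m,m}$, so there exist nonadjacent $u\in X$ and $v\in Y$. By maximality $G'+uv$ is Hamiltonian, and every Hamiltonian cycle of $G'+uv$ must use $uv$ (otherwise it would already lie in $G'$). Deleting $uv$ from such a cycle yields a Hamiltonian path $P=w_1w_2\cdots w_n$ of $G'$ with $w_1=u$ and $w_n=v$. Since $G'$ is balanced bipartite, $P$ alternates between $X$ and $Y$; as $u\in X$, the odd-indexed vertices lie in $X$ and the even-indexed ones in $Y$, consistently with $w_n=v\in Y$ (recall $n$ is even).

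The heart of the argument is a rotation/counting step. Define $A=\{\,i: uw_{i+1}\in E(G')\,\}$ and $B=\{\,i: vw_{i}\in E(G')\,\}$. Because the neighbours of $u\in X$ are even-indexed and the neighbours of $v\in Y$ are odd-indexed, both $A$ and $B$ are subsets of the set of odd indices $\{1,3,\ldots,n-1\}$, which has exactly $m$ elements; moreover $|A|=\deg_{G'}(u)$ and $|B|=\deg_{G'}(v)$, the nonadjacency of $u$ and $v$ ensuring that the two endpoints are not miscounted. The key observation is that if some index $i$ lay in $A\cap B$, then $u\sim w_{i+1}$ and $v\sim w_i$, and
\[
w_1 w_2\cdots w_i\, w_n w_{n-1}\cdots w_{i+1}\, w_1
\]
would be a Hamiltonian cycle of $G'$, contradicting non-Hamiltonicity. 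Hence $A\cap B=\emptyset$, and since both sit inside an $m$-element set, $\deg_{G'}(u)+\deg_{G'}(v)=|A|+|B|\le m$, contradicting the bound $\deg_{G'}(u)+\deg_{G'}(v)>m$ from the first paragraph. This contradiction shows $G$ is Hamiltonian.

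The step I expect to require the most care is the counting in the final paragraph: confirming that $A$ and $B$ genuinely lie in the same $m$-element set of odd indices, that their cardinalities equal the two degrees exactly, and that each $i\in A\cap B$ produces a genuine Hamiltonian cycle with no vertex repeated or omitted. The passage to the maximal counterexample and the extraction of the Hamiltonian path are routine, so essentially all of the difficulty is concentrated in getting this index bookkeeping right.
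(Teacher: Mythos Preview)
The paper does not actually prove this lemma; it is quoted without proof as a preliminary result from the textbook \cite{GO}, so there is no in-paper argument to compare against. Your proof is correct: it is the standard Moon--Moser/Ore-type argument for balanced bipartite graphs (pass to an edge-maximal non-Hamiltonian supergraph, extract a Hamiltonian $u$--$v$ path from a nonadjacent cross pair, then use the rotation/counting trick on the parity-aligned index sets $A$ and $B$), and the bookkeeping you flagged---$A,B\subseteq\{1,3,\ldots,n-1\}$, $|A|=\deg_{G'}(u)$, $|B|=\deg_{G'}(v)$, and each $i\in A\cap B$ yielding a genuine Hamiltonian cycle---checks out since $n-1\notin A$ and $1\notin B$ by the nonadjacency of $u$ and $v$.
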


\section{Proof of the Theorem \ref{t1}}
From now on, let $G$ be a graph of order $N=5n/2-1$ where $n\geq{6}$ and is even, such that neither $G$ contains $W_n$ nor it's complement, $\overline{G}$, contains $S_n$. Also, for every vertex $t\in{V(G)}$ consider $H_t=G[N(t)]$. Since $\overline{G}$ has no $S_n$, $deg_{\overline{G}}(v)\leq{n-2}$. Thus, $\delta(G)\geq{3n/2}$. In the middle of the proof, we sometimes interrupt it and have some lemmas.\\
 Let $v_0\in{V(G)}$ be an arbitrary vertex. there exists a $k\in\{0, 1, 2, \dots, n-2\}$ such that $deg_G(v_0)=3n/2+k$ since  $\delta(G)\geq{3n/2}$. Thus, the order of $H_{v_0}=G[N(v_0)]$ is $3n/2+k$. By the second part of the Theorem \ref{t2}, we have $|V(H_{v_0})|=3n/2+k\geq{R(C_n,C_s)}$, where $s=2t$, and $t$ is an integer such that $4\leq{2t}\leq{n+k+1}$. (Note that in Theorem \ref{t1}, we have  $n\geq{6}$, so the case $(n,s)=(4,4)$ does not occur for $R(C_n,C_s)$ in Theorem \ref{t2}). Thus, either $H_{v_0}$ contains $C_n$ or $\overline{H}_{v_0}$ contains $C_s$. But if $H_{v_0}$ contains $C_n$, then $G$ contains $W_n$, which is a contradiction. Hence we have the following corollary.
\begin{corollary}\label{c1}
Let $v\in{V(G)}$ and $k$ be an element in the set $\{0,1,\ldots, n-2\}$ such that $|V(H_{v})|=3n/2+k$. Then $\overline{H}_v$ contains $C_{2t}$ for all integers $t$ such that  $4\leq{2t}\leq{n+k+1}$.

\end{corollary}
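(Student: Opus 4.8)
The plan is to read off the order of $H_v$ from the degree constraints forced on $G$ and then feed $H_v$ into the exact cycle--cycle Ramsey numbers of Theorem~\ref{t2}; this is essentially the computation already begun in the paragraph preceding the corollary.

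\textbf{Step 1 (the order of $H_v$).} Record the two degree bounds. Because $\overline{G}$ contains no $S_n=K_{1,n-1}$, we have $\deg_{\overline{G}}(v)\le n-2$ for every vertex $v$, so $\deg_G(v)\ge (N-1)-(n-2)=3n/2$ since $N=5n/2-1$; trivially $\deg_G(v)\le N-1=3n/2+(n-2)$. Hence $|V(H_v)|=\deg_G(v)=3n/2+k$ for a unique $k\in\{0,1,\dots,n-2\}$, which is exactly the hypothesis of the statement.

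\textbf{Step 2 (comparing with $R(C_n,C_{2t})$).} Fix an integer $t$ with $4\le 2t\le n+k+1$. Since $n$ is even, $2t$ is even, and $n\ge 6$, the pair $(n,2t)$ is not the excluded pair $(4,4)$, so Theorem~\ref{t2} (applied, if needed, to $R(C_{2t},C_n)=R(C_n,C_{2t})$) gives $R(C_n,C_{2t})=\max\{n,2t\}+\min\{n,2t\}/2-1$. If $2t\le n$ this is at most $n+n/2-1=3n/2-1\le 3n/2+k$; if $2t>n$ it equals $2t+n/2-1\le (n+k+1)+n/2-1=3n/2+k$. In both cases $|V(H_v)|=3n/2+k\ge R(C_n,C_{2t})$.

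\textbf{Step 3 (forcing the cycle into $\overline{H}_v$).} Apply the definition of the Ramsey number to the graph $H_v$, which has at least $R(C_n,C_{2t})$ vertices: either $H_v$ contains $C_n$ or $\overline{H}_v$ contains $C_{2t}$. In the first case $v$ is adjacent in $G$ to every vertex of $H_v=G[N(v)]$, so that copy of $C_n$ together with the hub $v$ yields a $W_n=C_n+K_1$ in $G$, contradicting the standing assumption that $G$ has no $W_n$. Therefore $\overline{H}_v$ contains $C_{2t}$, as claimed. I do not expect a genuine obstacle here: the only points requiring care are the bookkeeping that pins $k$ to the window $\{0,\dots,n-2\}$, and invoking the cycle--Ramsey formula in the correct regime when $2t>n$ (via the symmetry $R(C_n,C_{2t})=R(C_{2t},C_n)$), noting that $(4,4)$ cannot arise because $n\ge 6$.
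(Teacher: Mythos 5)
Your proposal is correct and follows essentially the same route as the paper: bound $\deg_G(v)$ to get $|V(H_v)|=3n/2+k\ge R(C_n,C_{2t})$ via Theorem~\ref{t2}, then rule out $C_n\subseteq H_v$ because $v$ would serve as the hub of a $W_n$. The only difference is cosmetic: you spell out the regime $2t>n$ (using the symmetry $R(C_n,C_{2t})=R(C_{2t},C_n)$ and the bound $2t+n/2-1\le 3n/2+k$), which the paper leaves implicit in its appeal to ``the second part of Theorem~\ref{t2}.''
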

 \begin{proposition}\label{p1}
 $\omega(\overline{G})\leq{n-2}$ and $\omega(G)\leq{n-1}$.
 \end{proposition}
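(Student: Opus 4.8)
\textbf{Proof proposal for Proposition \ref{p1}.}

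The plan is to prove the two clique bounds separately, each by assuming the contrary and exhibiting a copy of the forbidden subgraph. For the bound $\omega(\overline{G})\le n-2$, suppose $\overline{G}$ contains $K_{n-1}$ on a vertex set $S$. Since $\delta(G)\ge 3n/2$, any vertex $v\notin S$ has $\deg_{\overline G}(v)\le n-2$, so in particular $v$ is adjacent (in $G$) to at least $|V(G)|-1-(n-2)=5n/2-1-1-n+2=3n/2$ vertices; more to the point, I want a vertex outside $S$ that is $G$-adjacent to every vertex of $S$. Since $|S|=n-1\le n-2+1$, a vertex $v\notin S$ fails to be $G$-adjacent to all of $S$ only if $v$ is $\overline G$-adjacent to some vertex of $S$; counting, the number of $\overline G$-edges between $V(G)\setminus S$ and $S$ is at most $|S|\cdot(n-2)-2\binom{n-1}{2}$ accounting for the edges already inside $S$, and since $|V(G)\setminus S|=5n/2-1-(n-1)=3n/2$ is large compared to this, one such $v$ exists. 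Then $G[S\cup\{v\}]\cong K_n$, so $G$ contains $W_n$ (take any vertex of $K_n$ as hub and an $(n)$-cycle through the rest... wait, $K_n$ has $n$ vertices, so it contains $W_{n-1}$, not $W_n$) — so I instead need $K_{n+1}$ to force $W_n$, hence I should argue that $\overline G$ cannot contain $K_{n}$, giving $\omega(\overline G)\le n-1$; the sharper bound $n-2$ must come from combining a $\overline G$-clique of size $n-1$ with the fact that each of its vertices has $\overline G$-degree $\le n-2$, forcing it to have almost no $\overline G$-neighbours outside, which then lets one extend it inside $G$ to a $K_{n+1}$ using two outside vertices adjacent to all of $S$ and to each other.

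For the bound $\omega(G)\le n-1$, suppose $G$ contains $K_{n+1}$ on a vertex set $T$. Then every vertex of $T$ is a candidate hub, and the remaining $n$ vertices of $T$ span $K_n\supseteq C_n$, so $G$ contains $W_n$, a contradiction. More carefully, if $\omega(G)=n$, take $T$ with $|T|=n$; I want to extend to $K_{n+1}$ by finding a vertex outside $T$ adjacent in $G$ to all of $T$, and again $|V(G)\setminus T|=5n/2-1-n=3n/2-1$ is large while each outside vertex misses at most $n-2$ vertices of $G$ (by $\delta(G)\ge 3n/2$, it misses at most $5n/2-1-1-3n/2=n-2$), so a pigeonhole/counting argument over the bipartite "non-edge" graph between $T$ and its complement produces the desired vertex, giving $K_{n+1}$ and hence $W_n$.

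The main obstacle, and the step I would spend the most care on, is the counting argument that upgrades a clique to a clique-plus-one (or clique-plus-two) inside $G$: one must check that the total number of $\overline G$-non-edges incident to the clique is genuinely smaller than the number of outside vertices times the required multiplicity, and for the sharper bound $\omega(\overline G)\le n-2$ one needs the stronger statement that \emph{two} mutually $G$-adjacent outside vertices each see all of $S$ in $G$. This is where the precise value $N=5n/2-1$ and the hypothesis $n\ge 6$ (even) get used: the slack $3n/2$ versus $n-2$ is comfortable, but the two-vertex extension is tighter and may require noting that the $\overline G$-clique $S$ of size $n-1$ already uses up a $(n-2)$-near-full share of each of its vertices' $\overline G$-degree budget, so $S$ sends at most $|S|$ edges total into $V(G)\setminus S$ in $\overline G$, leaving all but at most $n-1$ outside vertices fully $G$-joined to $S$; among those $\ge 3n/2-(n-1)=n/2+1\ge 2$ vertices one finds a $G$-edge (else they form a $\overline G$-clique of size $\ge n/2+1$, fine for small $n$ but one checks it cannot be too large, or one argues directly), completing $K_{n+1}$ and the contradiction.
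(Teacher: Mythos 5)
Your proposal for the first bound breaks down at its central step. If $S$ is a clique of size $n-1$ in $\overline{G}$, then $S$ is an \emph{independent} set in $G$, so it makes no sense to "extend it inside $G$ to a $K_n$ or $K_{n+1}$": even if you find two $G$-adjacent vertices $u,v$ outside $S$ that are $G$-joined to all of $S$ (and indeed every outside vertex is, since each vertex of $S$ already spends its entire $\overline{G}$-degree budget $n-2$ inside $S$), the graph $G[S\cup\{u,v\}]$ is $K_2+\overline{K}_{n-1}$, whose longest cycle is a $C_4$; it is nowhere near $K_{n+1}$ and contains no $W_n$ for $n\geq 6$. So the contradiction you aim for never materializes, and no counting refinement fixes this, because the obstacle is structural, not quantitative. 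The paper's proof of $\omega(\overline{G})\leq n-2$ goes another way entirely: since no vertex of $S$ has a $\overline{G}$-neighbour outside $S$, for any $v\notin S$ the set $S$ sits inside $N_G(v)$ and forms an isolated complete component of $\overline{H}_v$; Corollary \ref{c1} (which rests on the cycle Ramsey numbers of Theorem \ref{t2}) then forces $\overline{H}_v$ to contain an even cycle of length roughly $n+k$, which fits neither in that component (only $n-1$ vertices) nor in the remaining $n/2+k+1$ vertices of $\overline{H}_v$. Your sketch has no substitute for this use of the cycle machinery.

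The second bound also has a genuine gap. You want a vertex outside the $G$-clique $T$ of size $n$ that is $G$-adjacent to \emph{all} of $T$, and you claim a pigeonhole over non-edges provides it. It does not: each outside vertex may have up to $n-2$ non-neighbours inside $T$, and the total $\overline{G}$-degree capacity of $T$ is $n(n-2)$, which comfortably exceeds the number $3n/2-1$ of outside vertices, so it is perfectly consistent with all the degree constraints that every outside vertex misses at least one vertex of $T$; hence $K_{n+1}$ is not forced. The missing idea (and the one the paper uses) is that you do not need $K_{n+1}$ at all: every outside vertex has at least $2$ $G$-neighbours in $T$ (else its $\overline{G}$-degree would be at least $n-1$), and if some vertex $u$ has at least $3$, you already get $W_n$ inside $T\cup\{u\}$ by routing the $n$-cycle through $u$ between two of its neighbours and taking a third neighbour as hub; in the remaining case, where every outside vertex has exactly $2$ neighbours in $T$, the bound $\delta(G)\geq 3n/2$ forces $G[V\setminus T]$ to be the complete graph $K_{3n/2-1}$, which contains $W_n$. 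Without this case split, your argument for $\omega(G)\leq n-1$ does not close.
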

 \begin{proof}
 It is clear that $\omega(\overline{G})\leq{n-1}$ since $\Delta(\overline{G})\leq{n-2}$. Suppose $\omega(\overline{G})=n-1$ and $T=\{v_1,\ldots,v_{n-1}\}$ is a clique in $\overline{G}$. For any $v\in{V-T}$, $N_{\overline{G}}(v)\cap T=\emptyset$ otherwise $\overline{G}[T\cup\{v\}]$ contains $S_n$. Now consider $v\in{V-T}$ and $k$ be an element in the set $\{0,1,\ldots, n-2\}$ such that $|V(H_{v})|=3n/2+k$. The subgraph $\overline{H}_v$ is a disconnected graph with a connected component $\overline{G}[T]$. On the other hand, by Corollary \ref{c1}, $\overline{H}_v$ contains a cycle $C$ of length $2t$ where $t=\lfloor (n+k+1)/2 \rfloor$. Note that $C\nsubseteq{T}$, since $2t>n-1$. Thus, $C\subseteq{\overline{H}_v-T}$. But $\overline{H}_v-T$ has $n/2+k+1$ vertices, which is a contradiction. Hence $\omega(\overline{G})\leq{n-2}$. For the second part, assume to the contrary, $G$ contains $K_n$ and $H=G[V-K_n]$. Then $|N_G(v)\cap K_n|\geq{2}$ for all $v\in{V(H)}$, otherwise $deg_{\overline{G}}(v)\geq{n-1}$, which is a contradiction. If $|N_G(v)\cap K_n|=2$ for all $v\in{V(H)}$, then $H=K_{3n/2-1}$ since $\delta(G)\geq{3n/2}$. But $K_{3n/2-1}$ contains $W_n$, a contradiction. So, there is a vertex $u\in{V(H)}$ such that $|N_G(u)\cap K_n|\geq{3}$. But $\{u\}\cup K_n$ contains $W_n$, which is a contradiction. Thus,  $\omega(G)\leq{n-1}$.
 \end{proof}

We can divide the proof into some cases and subcases:\\

 \textbf{Case 1.} There is a vertex $v\in{V(G)}$ for which $H_v$ is bipartite.
 
 Let $H_v$ be a bipartite graph with bipartition $(X_v,Y_v)$ of order $3n/2+k$ such that $k\in\{0,1,\ldots,n-2\}$. Without loss of generality, suppose that $|X_v|\leq{|Y_v|}$.
   Thus, by Proposition \ref{p1}, we have $n/2+k+2\leq{|X_v|}\leq{3n/4+k/2}$ and $3n/4+k/2\leq{|Y_v|}\leq{n-2}$.\\
  Let $|X_v|=n/2+s$, where $s$ is an integer such that $k+2\leq{s}\leq{n/4+k/2}$, then $|Y_v|=n+k-s$. Since $\Delta(\overline{G})\leq{n-2}$ and $|H_v|=3n/2+k$, we conclude $\delta(H_v)\geq{n/2+k+1}$. Let $X_{v}^{\prime}$ and $Y_{v}^{\prime}$ obtained from $X_v$ and $Y_v$ by deleting $s$ and $n/2+k-s$ arbitrary vertices, respectively, and let $H_{v}^{\prime}=(X_{v}^{\prime},Y_{v}^\prime)$. Thus, $|X_{v}^{\prime}|=|Y_{v}^{\prime}|=n/2$  and $\delta(X_{v}^{\prime})\geq{s+1}$ and $\delta(Y_{v}^{\prime})\geq{n/2+k+1-s}$ in $H_{v}^{\prime}$. Hence for each two vertices $u_1\in{X_{v}^\prime}$ and $u_2\in{Y_{v}^\prime}$, 
 we have $deg(u_1)+deg(u_2)\geq{n/2+k+2}$ and by Lemma \ref{l4}, $H_{v}^{\prime}$ contains $C_n$. It means that $G$ contains $W_n$, which is a contradiction.\\
 
\textbf{Case 2.} For every vertex $t\in{V(G)}$, $H_t$ is non-bipartite.\\

\textbf{Subcase 2.1.} Suppose $H_t$ is disconnected for all $t\in{V(G)}$.

Let $t\in{V(G)}$ be an arbitrary vertex and $|V(H_t)|=3n/2+k$, where $k\in\{0, 1, 2, \dots, n-2\}$. We show that $H_t$ has exactly two connected components. Suppose to the contrary, $H_1$, $H_2$ and $H_3$ are three connected components of $H_t$. Since $\delta(H_t)\geq{n/2+k+1}$, we conclude $\delta(H_i)\geq{n/2+k+1}$ for $i=1, 2, 3$. Hence $|V(H_t)|>3n/2+k$, which is a contradiction. Now, let $X_t$, $Y_t$ be the set of vertices of two components of $H_t$. Assume that $|X_t|\leq{|Y_t|}$. We choose two adjacent vertices $u$ and $v$ in $Y_t$ since $\delta(H_t)\geq{n/2+k+1}$. Let $|V(H_u)|=3n/2+k^{\prime}$ and $|V(H_v)|=3n/2+k^{\prime\prime}$, where $k^{\prime}, k^{\prime\prime}\in\{0, 1, 2, \dots, n-2\}$. Also, let $X_u$, $Y_u$ and $X_v$, $Y_v$ be the set of vertices of two components of $H_u$ and $H_v$, respectively. Since, $H_t$ and $H_u$ are disconnected, $X_u$ or $Y_u$ is disjoint from $X_t$ and $Y_t$. To see this, with no loss of generality, suppose that $v$ is contained in $Y_u$. Thus, $t\in{Y_u}$ and hence $X_u\cap Y_t=X_u\cap X_t=\emptyset$. Similarly, $X_v$ or $Y_v$, say $X_v$, is disjoint from $X_t$ and $Y_t$. Thus, we have $Y_t\cap X_u=Y_t\cap X_v=X_t\cap X_u=X_t\cap X_v=\emptyset$. Also, $X_u\cap X_v=\emptyset$ otherwise if $l\in{X_u\cap X_v}$, then $l$ is adjacent to both $u$ and $v$. But $u\in{Y_v}$ implies that $l\in{Y_v}$. It means, $X_v\cap Y_v\neq\emptyset$ which is a contradiction. Thus, $X_u\cap X_v=\emptyset$.  Hence $|V(G)|\geq{|V(H_t)|+|X_u|+|X_v|}$ which means  $|V(G)|\geq{(3n/2+k)+(n/2+k^{\prime}+2)+(n/2+k^{\prime\prime}+2)}>5n/2-1$,
 which is a contradiction.\\

\textbf{Subcase 2.2.} Suppose $H_t$ is connected for some $t\in{V(G)}$.\\
 Assume that there exists a vertex $u\in{V(G)}$ for which $H_u$ is 2-connected and $|V(H_u)|=3n/2+k$ for some $k\in\{0,1,2,\ldots,n-2\}$. Thus, $\delta(H_u)\geq{n/2+k+1}\geq{(3n/2+k+2)/3}$ and by Lemma \ref{l1}, $H_u$ is weakly pancyclic with $g(G)=3$ or $4$. Also, by Lemma \ref{l3}, $c(H_u)\geq{min\{2\delta, 3n/2+k\}}$. Hence $c(H_u)\geq{n}$ which implies that $H_u$ contains $C_n$, a contradiction.\\
Now, assume each connected $H_t$ contains a cut-vertex.
Let $u$ be a cut-vertex of $H_t$ and $|V(H_t)|=3n/2+k$. We show that $H_t-u$ has exactly two connected components. Suppose to the contrary, $H_1$, $H_2$ and $H_3$ are three connected components of $H_t-u$. Since $\delta(H_t)\geq{n/2+k+1}$, $\delta(H_i)\geq{n/2+k}$ for $i=1, 2, 3$. Hence $|H_t|>3n/2+k$, which is a contradiction. Now, let $s_1$ be a cut-vertex of $H_t$ and $X_t$, $Y_t$ be the set of vertices of two components of $H_t-s_1$. Assume that $|X_t|\leq{|Y_t|}$. We choose two adjacent vertices $u$ and $v$ in $Y_t$ since $\delta(H_t)\geq{n/2+k+1}$. Let $s_2$ and $s_3$ be the cut-vertices of $H_u$ and $H_v$, respectively (if any of these cut-vertices didn't exist, for instance $s_1$, then the corresponding subgraph, $H_t$, is disconnected and the procedure is the same as  subcase 2.1) and $|V(H_u)|=3n/2+k^{\prime}$ and $|V(H_v)|=3n/2+k^{\prime\prime}$, where $k^{\prime}, k^{\prime\prime}\in\{0,1,2, \ldots, n-2\}$. Also, let $X_u$, $Y_u$ and $X_v$, $Y_v$ be the set of vertices of two components of $H_u-s_2$ and $H_v-s_3$, respectively. Since, $H_t-s_1$, $H_u-s_2$ and $H_v-s_3$ are disconnected, with the same statement of subcase 2.1 and without loss of generality, we have $Y_t\cap X_u=Y_t\cap X_v=X_t\cap X_u=X_t\cap X_v=X_u\cap X_v=\emptyset$. Hence $|V(G)|\geq{|V(H_t-s_1)|+|X_u|+|X_v|}$ which means $|V(G)|\geq{(3n/2+k-1)+(n/2+k^{\prime}+1)+(n/2+k^{\prime\prime}+1)}>5n/2-1$,
 which is a contradiction, and this completes the proof.\\
 
 Now, by Theorem \ref{tn} and \ref{t1}, the following Corollary is obvious.
 \begin{corollary}
 For $n\geq{6}$ and even, we have
 $R(W_n,S_n)=5n/2-2$ or $5n/2-1$.
 \end{corollary}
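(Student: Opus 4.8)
The plan is to argue by contradiction. Suppose $G$ has order $N=5n/2-1$, contains no $W_n$, and $\overline{G}$ contains no $S_n=K_{1,n-1}$. The star condition says every vertex of $\overline{G}$ has degree at most $n-2$, i.e. $\Delta(\overline G)\le n-2$, so $\delta(G)\ge N-1-(n-2)=3n/2$. The pivotal observation is that for every vertex $t$ the neighborhood graph $H_t:=G[N(t)]$ is $C_n$-free: a copy of $C_n$ in $H_t$ together with the hub $t$ (adjacent to all of $N(t)$) would be a $W_n$. So the whole proof reduces to showing that some $H_t$ must contain a $C_n$, and it proceeds by a structural analysis of the graphs $H_t$.

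First I would record the basic parameters. Writing $|V(H_t)|=\deg_G(t)=3n/2+k$ with $k\in\{0,1,\dots,n-2\}$, a short count (a neighbor $w$ of $t$ has at most $n-k-1$ of its $G$-neighbors outside $N(t)$, one of them being $t$ itself) gives $\delta(H_t)\ge n/2+k+1$. Next I would extract a ``the complement has long even cycles'' statement: since $|V(H_t)|=3n/2+k\ge R(C_n,C_{2j})$ for every $j$ with $4\le 2j\le n+k+1$ by the even cycle--cycle Ramsey values of Theorem \ref{t2}, and since $H_t$ is $C_n$-free, $\overline{H_t}$ must contain $C_{2j}$ for all such $j$; in particular a cycle of even length at least $n$. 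This forces the clique bounds $\omega(\overline G)\le n-2$ and $\omega(G)\le n-1$: an $(n-1)$-clique $T$ of $\overline G$ lying inside some $N(t)$ (it must lie inside a common neighborhood, else $\overline G$ has an $S_n$) would be an entire connected component of $\overline{H_t}$, while only $n/2+k+1<n$ vertices remain outside it --- too few to carry the long even cycle just produced; and an $n$-clique $K$ of $G$ is excluded because the $\ge 3n/2$ remaining vertices each send $\ge 2$ edges into $K$ (else their complement-degree is $\ge n-1$), which forces $G-K=K_{3n/2-1}\supseteq W_n$ when all send exactly two, and a direct $W_n$ (using a vertex with $\ge 3$ edges into $K$ as a rim vertex and one of its $K$-neighbors as hub) otherwise.

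With these in hand I would split on the structure of the neighborhood graphs. \emph{Case 1: some $H_t$ is bipartite.} Both sides of its bipartition are cliques in $\overline G$, hence have size $\le n-2$, and since $|V(H_t)|\ge 3n/2$ both sides have at least $n/2$ vertices; deleting down to sides of size exactly $n/2$ while tracking $\delta(H_t)\ge n/2+k+1$ makes the degree-sum condition of Lemma \ref{l4} hold, so the reduced bipartite graph is Hamiltonian, giving $C_n\subseteq H_t$ --- contradiction. \emph{Case 2: every $H_t$ is non-bipartite.} If some $H_t$ is $2$-connected, Dirac's Lemma \ref{l3} gives $c(H_t)\ge\min\{2\delta(H_t),|V(H_t)|\}\ge n$, while Lemma \ref{l1} (which applies since $\delta(H_t)\ge n/2+k+1\ge(|V(H_t)|+2)/3$) makes $H_t$ weakly pancyclic with girth $3$ or $4$; together these yield all cycle lengths up to $c(H_t)$, in particular $C_n$ --- contradiction. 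So we are reduced to the situation in which every $H_t$ is either disconnected or connected with a cut-vertex.

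This last situation is where I expect the real work, and it is handled by a vertex-counting overflow. For a disconnected $H_t$, the bound $\delta(H_t)\ge n/2+k+1$ forces each component to have $\ge n/2+k+2$ vertices, hence \emph{exactly two} components $X_t,Y_t$ (three would exceed $3n/2+k$). Picking an edge $uv$ inside the larger component and examining $H_u$ and $H_v$ --- likewise two-component graphs --- one checks that $t$ lies in the same component of $H_u$ as $u$ and in the same component of $H_v$ as $v$, so the ``other'' component $X_u$ of $H_u$ and the ``other'' component $X_v$ of $H_v$ consist entirely of vertices non-adjacent to $t$, hence disjoint from $V(H_t)$, and a further short argument gives $X_u\cap X_v=\emptyset$. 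Then $|V(G)|\ge|V(H_t)|+|X_u|+|X_v|\ge(3n/2+k)+(n/2+2)+(n/2+2)>5n/2-1$, a contradiction. The cut-vertex case runs in exact parallel: delete one cut-vertex from each of $H_t,H_u,H_v$ (or fall back to the disconnected case if one of them turns out disconnected), with the component-size bound weakened to $n/2+k+1$, and the count $(3n/2+k-1)+(n/2+1)+(n/2+1)>5n/2-1$ still overflows $N$. The delicate points throughout are (i) confirming the exact two-component structure of all three inspected neighborhood graphs, and (ii) the disjointness bookkeeping for $V(H_t),X_u,X_v$ (and its cut-vertex analogue); once those are pinned down, every case closes by comparing a sum of three neighborhood-size terms against $N=5n/2-1$.
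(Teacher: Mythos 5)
There is a genuine gap, and it is not in the details you worried about but in the scope of what you set out to prove. The statement in question asserts that $R(W_n,S_n)$ equals $5n/2-2$ \emph{or} $5n/2-1$, which packages two inequalities: the upper bound $R(W_n,S_n)\le 5n/2-1$ and the lower bound $R(W_n,S_n)\ge 5n/2-2$. Your entire proposal is a proof (by contradiction on a graph of order $N=5n/2-1$) of the upper bound only; in fact it reproduces, case for case, the paper's proof of Theorem \ref{t1} --- same degree bound $\delta(G)\ge 3n/2$, same use of Theorem \ref{t2} to force long even cycles in $\overline{H_t}$, same clique bounds, the same bipartite case via Lemma \ref{l4}, the same $2$-connected case via Lemmas \ref{l1} and \ref{l3}, and the same disjointness/counting overflow in the disconnected and cut-vertex cases. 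That part is sound, but it only yields $R(W_n,S_n)\le 5n/2-1$; by itself it is consistent with, say, $R(W_n,S_n)=2n+1$, so it does not establish the corollary.

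What is missing is the lower bound $R(W_n,S_n)\ge 5n/2-2$, which cannot come from the kind of argument you give: it requires exhibiting (or citing) a graph on $5n/2-3$ vertices containing no $W_n$ whose complement contains no $S_n$. In the paper this is exactly the role of Theorem \ref{tn} (Li and Schiermeyer): taking $m=n$ with $n\ge 6$ even satisfies $6\le m\le 2n-4$, and since $n$ is even the ``otherwise'' case gives $R(W_n,S_n)\ge 2n+n/2-2=5n/2-2$. The corollary is then immediate from that theorem together with Theorem \ref{t1}. To repair your proposal, either invoke Theorem \ref{tn} as the paper does, or supply the explicit extremal construction behind it; without one of these the conclusion ``$R(W_n,S_n)=5n/2-2$ or $5n/2-1$'' does not follow.
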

 
 
 




\bibliographystyle{plain} 
\bibliography{mybib}{}

\end{document}